\documentclass{amsart}
\usepackage{amsmath,amsfonts,amssymb,amsthm}
\usepackage{url}
\usepackage{graphicx,color}
\usepackage{enumerate}
\usepackage{amsthm}
\usepackage{amsmath}

\urlstyle{sf}
\newtheorem{theorem}{Theorem}[section]
\newtheorem{lemma}[theorem]{Lemma}

\newtheorem*{theorem*}{Theorem}
\newtheorem{corollary}[theorem]{Corollary}
\theoremstyle{definition}
\newtheorem{definition}{Definition}[section]

\theoremstyle{remark}
\newtheorem*{remark}{Remark}
\numberwithin{equation}{section}

\author{}
\address{}


\keywords{triangle-pentagon complex, CAT(0) metric, combinatorial girth, flagness, $7$-location}
\subjclass[2010]{Primary 20F67, Secondary 05C99}

\begin{document}

\title{CAT(0) triangle-pentagon complexes}

\author[Ioana-Claudia Laz\u{a}r]{
Ioana-Claudia Laz\u{a}r\\
Politehnica University of Timi\c{s}oara, Dept. of Mathematics,\\
Victoriei Square $2$, $300006$-Timi\c{s}oara, Romania\\
E-mail address: ioana.lazar@upt.ro}

\begin{abstract}
We show that a certain triangulation of CAT(0) triangle-pentagon complexes is $7$-located and locally $5$-large.
Hereby we give examples of $7$-located, locally $5$-large groups. 

\end{abstract}

\maketitle

\section{Introduction}

Curvature can be expressed both in metric and combinatorial terms. On the metric side, one can refer to nonpositively  (negatively curved) in the sense of Aleksandrov and Gromov,
i.e. by comparing small triangles in the space with triangles in the Euclidean plane (hyperbolic space). Such triangles must satisfy the CAT(0) (CAT(-1))  inequality. In   \cite{Gr}, \cite{BH}, \cite{Ch}, \cite{Ch08}, \cite{crowley_2008}, \cite{AB}, \cite{AF}, \cite{GG} such metric curvature conditions are investigated. On the combinatorial side, one can express curvature 
using a condition, called local $k$-largeness ($k \geq 4$) which was introduced independently by Chepoi \cite{Ch} (under the name of bridged complexes), Januszkiewicz-{\' S}wi{\c a}tkowski \cite{JS1} and Haglund \cite{Hag}.
In \cite{BCCGO} a common generalization of systolic
and of CAT(0) cubical complexes is given. 
In \cite{CCGHO}, \cite{CCHO}, \cite{O-sdn}, \cite{ChOs}, \cite{O-8loc}, \cite{H}, \cite{HL}, \cite{L-8loc}, \cite{L-8loc2}, \cite{CCG} other combinatorial curvature conditions are studied.

In \cite{O-8loc} a local combinatorial condition called $8$-location is introduced and used to provide a new solution to Thurston's
problem about hyperbolicity of some $3$-manifolds. In \cite{L-8loc} we study a version of $8$-location, suggested in \cite[Subsection 5.1]{O-8loc}.
This $8$-location says that homotopically trivial loops of length at most $8$ admit filling diagrams with one internal vertex.
However, in the new $8$-location essential $4$-loops are allowed.
In \cite{L-8loc} (Theorem $4.3$) it is shown that simply connected,
$8$-located simplicial complexes are Gromov hyperbolic. 
In \cite{L-8loc2} another combinatorial curvature condition, called the $5/9$-condition, is introduced. It turns out that the complexes which satisfy
it, are $8$-located and therefore Gromov hyperbolic as well (see \cite{L-8loc2}). Besides, locally weakly systolic complexes (see \cite{O-sdn}) are $7$-located, locally $5$-large (see \cite{HL}; $7$-located complexes are considered in \cite{HL} as introduced initially in \cite{O-8loc}). Weakly systolic complexes appear in familiar environment since thickenings of CAT($-1$) cubical complexes are weakly systolic, but not
systolic (see \cite{O-sdn}). These arguments make $k$-located complexes ($k \geq 7$) worth being further investigated.

In the current paper we study the interplay between a certain metric curvature condition (given by the CAT(0) metric) and a certain combinatorial curvature condition (given by $7$-location). Such relation is studied on a particular triangulation of a triangle-pentagon complex presented below. A similar connection between the two curvature conditions was investigated before (see \cite{HL}, Theorem $4.6$). Namely, in \cite{HL} it is shown that a certain metric on a $7$-located simplicial disc is locally CAT(0). However, the definition of $7$-location referred to in \cite{HL} (which is the one considered in \cite{O-8loc}) differs from the one we shall consider. More precisely, we refer to the definition introduced in \cite{O-8loc} and applied further in \cite{L-8loc}. 

Next we present the main construction of the paper. We consider a CAT(0) triangle-pentagon complex $X$ and subdivide it as follows. We subdivide each pentagon of $X$ into five triangles by considering a new vertex, namely the center of each pentagon and joining it with the vertices of the corresponding pentagon. Hence we obtain a simplicial complex $X^{\star}$ whose vertex set is the union of the set of the vertices of $X$ and the centers of the pentagons of $X$. The edges of $X^{\star}$ are given by the union of the edges of $X$ and the edges joining the centers of the pentagons with the vertices of the corresponding pentagons. The $2$-simplices of $X^{\star}$ are given by the union of the triangles of $X$ and the triangles obtained by subdividing each pentagon of $X$ into five triangles. Note that each pentagon of $X$ is replaced in $X^{\star}$ by $5$ triangles.

The main purpose of the paper is to show.


\begin{theorem*}  (Lemma $3.2$, Theorem $3.3$) 
A CAT(0) triangle-pentagon complex admits a triangulation which, when endowed with a particular metric, is $7$-located and locally $5$-large.
\end{theorem*}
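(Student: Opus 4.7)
The plan is to equip the triangulation $X^\star$ with the piecewise Euclidean metric inherited from the CAT(0) structure on $X$: each triangle of $X$ is realised as a unit equilateral Euclidean triangle, and each of the five triangles subdividing a pentagon is realised as the isosceles Euclidean triangle with apex angle $2\pi/5$ at the pentagon centre and unit base. In particular, the interior angle at an original vertex of $X$ inside a subdivision triangle equals $3\pi/10$. The argument then splits into two independent parts, corresponding to Lemma $3.2$ (locally $5$-large) and Theorem $3.3$ ($7$-located).

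For local $5$-largeness, I would examine the link of every vertex of $X^\star$. At a pentagon-centre vertex the link is precisely the boundary $5$-cycle of the pentagon, whose girth is exactly $5$. At an original vertex $v$ of $X$, the link $\mathrm{lk}(v,X^\star)$ is obtained from $\mathrm{lk}(v,X)$ by replacing, for each pentagon at $v$, the edge of $\mathrm{lk}(v,X)$ joining the two pentagon-neighbours of $v$ by a path of length $2$ through the image of the pentagon centre. A putative $4$-cycle in $\mathrm{lk}(v,X^\star)$ would thus project to a closed combinatorial path in $\mathrm{lk}(v,X)$ of at most $4$ edges containing at most one pentagon edge, whose total angular length (each triangle edge contributing $\pi/3$, each pentagon edge contributing $3\pi/5$) is strictly less than $2\pi$. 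This contradicts the CAT(0) condition on $X$, which forces the angular systole of $\mathrm{lk}(v,X)$ to be at least $2\pi$; no such $4$-cycle exists, and local $5$-largeness follows.

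For the $7$-location step, I would take a full homotopically trivial cycle $\gamma$ in $X^\star$ of combinatorial length at most $7$ and produce a filling diagram with at most one interior vertex, as demanded by the definition imported from \cite{O-8loc,L-8loc}. The main tool is a minimal-area disc diagram $D\to X^\star$ for $\gamma$, equipped with the pulled-back piecewise Euclidean metric; the combinatorial Gauss--Bonnet formula applied to $D$, combined with the nonpositive curvature inherited by $X^\star$ from $X$, controls the number and position of interior vertices of $D$. I would then run a case analysis on how many pentagon centres $\gamma$ visits, using the fact that each visited centre contributes a pair of consecutive edges to $\gamma$, so that only a bounded collection of combinatorial types is possible for $\gamma$ of length at most $7$. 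The local $5$-largeness already proved rules out $4$-cycle degeneracies in the interior of $D$, and in each remaining case $D$ reduces to the single-interior-vertex template required by $7$-location.

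The principal obstacle lies precisely in this case analysis: when $\gamma$ passes through two or three pentagon centres, the minimal disc diagram can a priori contain several interior vertices distributed across the subdivisions of several pentagons, and the careful bookkeeping of the angles $\pi/3$, $3\pi/10$ and $2\pi/5$ around each interior vertex, together with verification that all but one interior vertex can be eliminated via local reductions compatible with CAT(0) flatness, will be the most delicate part of the argument.
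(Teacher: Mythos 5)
Your first half (local $5$-largeness) is essentially the paper's own argument: both reduce to the Gromov link condition and count angles, each link edge at an original vertex contributing $\pi/3$ or $3\pi/10$, so that any cycle of length at most $4$ in a link has angular length below $2\pi$. (One small slip: a $4$-cycle in the link of $v$ in $X^{\star}$ can pass through \emph{two} pentagon centres, projecting to a digon of two pentagon edges in the link of $v$ in $X$; its angular length $6\pi/5$ is still less than $2\pi$, so the conclusion survives, but ``at most one pentagon edge'' is not quite right.)

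The $7$-location half has a genuine gap, on two counts. First, you are proving the wrong statement: the definition of $7$-location adopted in Section 2 of the paper is not ``every null-homotopic cycle of length at most $7$ admits a filling diagram with one interior vertex,'' but rather ``every dwheel $W_1\cup W_2$ whose two wheels are full subcomplexes and whose boundary has length at most $7$ is contained in the link of a single vertex.'' Second, and more seriously, your disc-diagram strategy is not carried out: you yourself flag the case analysis for cycles passing through two or three pentagon centres as ``the principal obstacle'' and ``the most delicate part,'' and leave it unresolved, so as written there is no argument that all but one interior vertex of the minimal diagram can be eliminated. The paper avoids all of this by showing the dwheel condition is \emph{vacuous}: a dwheel of boundary length at most $7$ requires two adjacent vertices each carrying a full wheel of size $5$ or $6$; angle counting shows the only vertices admitting a full $5$-cycle in their link are pentagon centres (any other vertex would have angle sum at most $5\cdot\tfrac{\pi}{3}<2\pi$); two pentagon centres are never adjacent; and a pentagon vertex adjacent to the centre cannot carry a full $5$- or $6$-wheel, since its angle sum would be at most $2\cdot\tfrac{3\pi}{10}+4\cdot\tfrac{\pi}{3}=\tfrac{29\pi}{15}<2\pi$, contradicting the fact (Lemma 3.1) that $d^{\star}$ is CAT(0). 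Hence no qualifying dwheel exists and $7$-location follows from flagness alone. You should either switch to this vacuity argument or actually complete the Gauss--Bonnet case analysis you postpone.
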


The result above resembles in some sense the one shown on triangle-pentagon complexes of CB-graphs (\cite{CCG}). Namely, in order to obtain dismantability, triangle-pentagon complexes of CB-graphs are triangulated in a certain way as well (see \cite{CCG}, chapter $5.1$).

As corollary, we obtain examples of $7$-located, locally $5$-large groups.

\subsection{Structure of the paper} In Section $2$ we present basic definitions,
notation and results; in Section $3$ we prove Theorem $3.3$.

\subsection{Acknowledgements:} I thank Giovanni Sartori for a useful remark.

\section{Preliminaries}

\subsection{Generalities}

Let $X$ be a simplicial complex.
We denote by $X^{(k)}$ the $k$-skeleton of $X, 0 \leq k < \dim X$.
A subcomplex $L$ in $X$ is called \emph{full} as a subcomplex of $X$ if any simplex of $X$ spanned by a set of vertices in $L$, is a simplex of $L$.
For a set $A = \{ v_{1}, ..., v_{k} \}$ of vertices of $X$, by $\langle A \rangle$ or by $\langle  v_{1}, ..., v_{k} \rangle$ we denote the \emph{span} of $A$, i.e. the smallest full subcomplex of $X$ that contains $A$.
We write $v \sim v'$ if $\langle  v,v' \rangle \in X$ (it can happen that $v = v'$).
We write $v \nsim v'$ if $\langle  v,v' \rangle \notin X$.
We call $X$ {\it flag} if any finite set of vertices which are pairwise connected by edges of $X$, spans a simplex of $X$.

We define the \emph{combinatorial metric} on the $0$-skeleton of $X$ as the number of edges in the shortest $1$-skeleton path joining two given vertices.

A {\it cycle} ({\it loop}) $\gamma$ in $X$ is a subcomplex of $X$ isomorphic to a triangulation of $S^{1}$.
A \emph{full cycle} in $X$ is a cycle that is full as a subcomplex of $X$. 
A \emph{$k$-wheel} in $X$ $(v_{0}; v_{1}, ..., v_{k})$ (where $v_{i}, i \in \{0, ..., k\}$ are vertices of $X$) is a subcomplex of $X$ such that $\gamma = (v_{1}, ..., v_{k})$ is a full cycle and $v_{0} \sim v_{1}, ..., v_{k}$. The \emph{length} of $\gamma$ (denoted by $|\gamma|$) is the number of edges of $\gamma$.

\subsection{Triangle-pentagon complexes}

Triangle-pentagon complexes are introduced and studied in \cite{CCG}.

\begin{definition}
We define a \emph{triangle-pentagon complex} of a graph $G$ as
a two-dimensional cell complex with $1$-skeleton $G$, and such that the two-cells are (solid) triangles and
pentagons whose boundaries are identified by isomorphisms with (graph) triangles and pentagons in $G$. There are no full $4$-cycles in a triangle-pentagon complex. Pentagon of a triangle-pentagon complex have no diagonals. 
\end{definition}

Below we introduce the main construction of the paper.

\begin{definition}
Let $X$ be a triangle-pentagon complex. Let $X_{p}$ denote the set of centers of the pentagons of $X$. We define $X^{\star}$ to be a $2$-dimensional simplicial complex  whose vertex set is the union of all vertices of $X$ and the set of vertices $X_{p}$. The edges of $X^{\star}$ is the union of the edges of $X$ and a set of edges joining the vertices of $X_{p}$ to the vertices of the corresponding pentagons. The triangles of $X^{\star}$ are the union of the triangles of $X$ and the triangles obtained by dividing each pentagon into $5$ triangles by considering the centers of the pentagons. We call the pentagons triangulated in this way, \emph{centered pentagons} or  \emph{$5$-wheels}. Note that in the simplicial complex $X^{\star}$ each pentagon of the triangle-pentagon complex $X$, is replaced by five triangles. 
\end{definition}

 \subsection{CAT(0) spaces}

\begin{definition}
Let $(X,d)$ be a geodesic space. Let $\triangle (p,q,r)$ be a geodesic
triangle in $X$. Let $\overline{\triangle}
(\overline{p},\overline{q},\overline{r}) \subset \mathbf{R}^{2}$ be
a comparison triangle for $\triangle$. The metric $d$ is
\emph{CAT(0)} if for all $x,y \in \triangle$ and all comparison
points $\overline{x}, \overline{y} \in \overline{\triangle}$, the
CAT(0) inequality holds 
$d(x,y) \leq d_{\mathbf{R}^{2}}(\overline{x}, \overline{y})$.
\end{definition}

\begin{definition}\label{1.2.2}
A geodesic space $X$ is called a \emph{CAT(0) space} if it is a geodesic
space all of whose geodesic triangles satisfy the CAT(0) inequality.
\end{definition}


\begin{theorem} Let $X$ be a $2$-dimensional simplicial complex with $\rm{Shapes}(X)$ finite. Then $X$ is locally a CAT(0) space if and only
if for each vertex $v \in X$, each injective loop in $X_{v}$ has length at least $2 \pi$.
(see \cite{BH}, Theorem $5.5$, Lemma $5.6$)
\end{theorem}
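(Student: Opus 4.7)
The plan is to reduce the statement to Gromov's link condition and then analyse the resulting criterion on the one-dimensional links combinatorially.

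First I would observe that, because $\mathrm{Shapes}(X)$ is finite, Bridson's existence theorem makes the piecewise Euclidean complex $X$ a complete geodesic metric space in which every point $x$ has a neighbourhood isometric to a small open metric cone over its link. For $x$ in the interior of a $2$-simplex the link is an isometric copy of the round circle of length $2\pi$ and the cone is flat; for $x$ in the relative interior of a $1$-simplex the link consists of two antipodal vertices joined by an arc of length $\pi$ for each $2$-simplex containing the edge, and every injective loop in it has length exactly $2\pi$. In either case local CAT(0) at $x$ is automatic, so only the link condition at vertices carries content.

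Second, I would invoke Gromov's link condition in the form of \cite{BH}, Theorem~II.5.2: for a piecewise Euclidean complex with finite shapes, local CAT(0) at a vertex $v$ is equivalent to the link $X_v$, equipped with its natural angular metric, being a CAT(1) space. In our situation $X_v$ is a metric graph whose vertices are the edges of $X$ containing $v$, whose edges correspond to the $2$-simplices containing $v$, and in which each such edge has length equal to the angle of the corresponding $2$-simplex at $v$.

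Third, I would establish the metric-graph characterisation of CAT(1): a one-dimensional complete geodesic space built from finitely many edge-length classes is CAT(1) if and only if every injective loop in it has length at least $2\pi$. The forward direction is straightforward, since an injective loop of length $\ell<2\pi$ supports a closed local geodesic whose midpoint pair has antipodal distance $\ell/2<\pi$, violating the comparison inequality against a round circle of length $2\pi$. For the reverse direction I would argue by contradiction: if CAT(1) fails then some shortest non-trivial closed local geodesic has length below $2\pi$, and a standard surgery that cuts and reconnects the loop at a self-intersection replaces it by a strictly shorter closed local geodesic, so one may assume the minimiser is embedded and thus realised by an injective combinatorial loop of length $<2\pi$. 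Combining this with the reduction in the second step yields the theorem.

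The main obstacle is the surgery step in the metric-graph characterisation: one must check that at a self-intersection of a closed local geodesic in $X_v$ the two locally geodesic subloops produced by cutting are themselves locally geodesic and at least one is strictly shorter, so that the combinatorial condition on \emph{injective} loops genuinely captures every metric obstruction to CAT(1). The finiteness of $\mathrm{Shapes}(X)$ is used here to guarantee that infima of lengths of closed local geodesics in $X_v$ are realised, which makes the surgery argument effective.
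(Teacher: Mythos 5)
The paper offers no proof of this statement; it is quoted directly from Bridson--Haefliger, and the citation (Theorem $5.5$, Lemma $5.6$ of \cite{BH}) points exactly to the two ingredients you use: Gromov's link condition reducing local CAT(0) to the links of vertices being CAT(1), and the characterisation of CAT(1) one-dimensional complexes by the absence of injective loops of length less than $2\pi$. Your proposal is correct and follows essentially the same route as the cited source.
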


\subsection{Local $k$-largeness}

Let $\sigma$ be a simplex of $X$.
The \emph{link} of $X$ at $\sigma$, denoted $X_{\sigma}$, is the subcomplex of $X$ consisting of all simplices of $X$ which are disjoint from $\sigma$ and which, together with $\sigma$, span a simplex of $X$.
We call a flag simplicial complex $k$\emph{-large}, $k \geq 4$, if there are no full $j$-cycles in $X$, for $j<k$.
We say $X$ is \emph{locally} $k$\emph{-large}, $k \geq 4$, if all its links are $k$-large.  We call a vertex of $X$ $k$-large if its link is $k$-large.

\subsection{$7$-location}

We introduce further a global combinatorial condition on a flag simplicial complex.


\begin{definition}
A $(k,l)$-dwheel $W = W_{1} \cup W_{2} = (w_{l}; v_{1},v_{2}, v_{3} = w_{l-1},\cdots,$ $ v_{k}) \cup (v_{2}; w_{1}, w_{2}, \cdots, w_{l})$ is the union of two full wheels with $v_{1}=w_{1}$ or $v_{1} \sim w_{1}$. If $v_{1}=w_{1}$, we call $W$ \textbf{planar dwheel}. If $v_{1} \sim w_{1}$, we call $W$ \textbf{nonplanar dwheel}. The boundary length of $W$ is $k+l-4$ if $W$ is planar and $k+l-3$ if it is nonplanar.
\end{definition}

        \begin{figure}[h]
            \begin{center}
               \includegraphics[height=4cm]{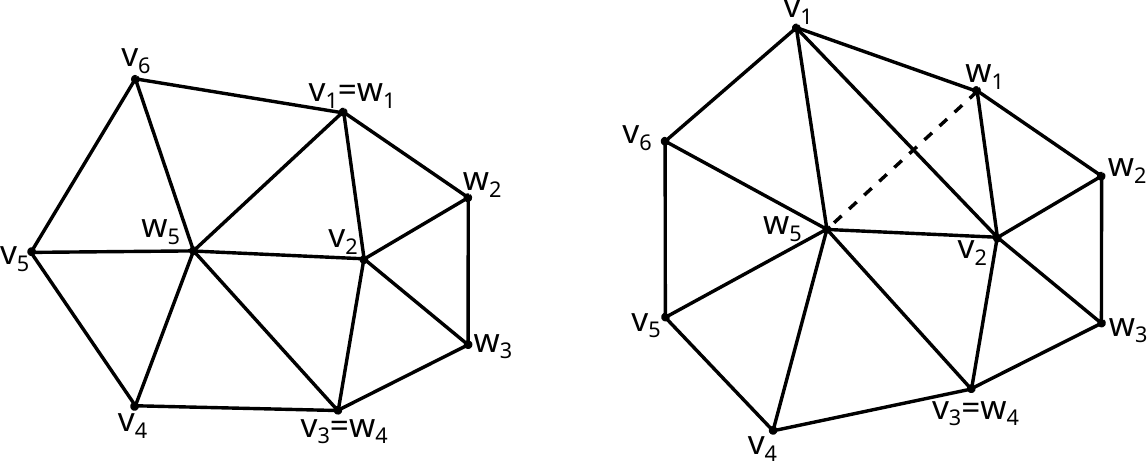}
              \caption{Planar dwheel and nonplanar dwheel}
            \end{center}
        \end{figure}

\begin{definition}\label{2.4}
A simplicial complex is $m$-\emph{located}, $m \geq 4$ if it is flag and whenever a dwheel subcomplex $W = W_{1} \cup W_{2}$ of $X$ satisfies the following conditions:
\begin{enumerate}
\item $\partial W$ has length at most $m$;
\item the wheels $W_{1}$ and $W_{2}$ are full subcomplexes of $X$,
\end{enumerate} the dwheel subcomplex $W$ is contained in the link $X_{v}$ of some vertex $v$.
\end{definition}

\begin{definition}
Let $X$ be a simplicial complex and let $v$ be a vertex of $X$. We call \emph{combinatorial girth} of $v$ the number of edges in $X_{v}$.
\end{definition}

\begin{remark}
Because triangle-pentagon complexes are $2$-dimensional, they do not contain nonplanar dwheels.
\end{remark}

\begin{definition}\label{2.2}
We say that a flag simplicial complex satisfies the \emph{$5/8$-condition}, or that it is a \emph{$5/8$-complex}, if it satisfies the following two conditions:
\begin{description}
\item[($5/8$)]
every vertex adjacent to a $4$-large (but not $5$-large) vertex is $8$-large;
\item[($6/7$)]
every vertex adjacent to a $5$-large (but not $6$-large) vertex is $7$-large.
\end{description}
\end{definition}

\subsection{Geometric group actions}

A group $\Gamma$ \textit{acts} by automorphisms on a triangle-pentagon complex $X$ if there is a homomorphism $\Gamma \rightarrow \rm{Aut}(X)$ called
an \textit{action} of $\Gamma$. The action is \emph{geometric} (or $\Gamma$ acts geometrically) if it is proper (i.e., cells stabilizers are
finite) and cocompact (i.e., the quotient $X|_{\Gamma}$ is compact). In the current paper we usually consider
geometric actions on graphs, viewed as one-dimensional complexes. Namely, we say that a group $\Gamma$ acts
on a graph $G$ when it acts on $G$ if we consider $G$ as a $1$-dimensional simplicial complex. Observe that if $\Gamma$ acts on $G$, then it induces a group action of $\Gamma$ on its clique complex $X(G)$. Note that this also induces an action of $\Gamma$ on the triangle-pentagon complex of $G$.

\section{The main result}

We show that a certain subdivision of a CAT(0)  triangle-pentagon complex is $7$-located and locally $5$-large.

\begin{definition}
Let $W_{5}$ be a $5$-wheel with central vertex $v$. We metrize each triangle of $W$ as an Euclidean triangle whose angle at $v$ is $\cfrac{2 \pi}{5}$, whose remaining two angles are both equal to $\cfrac{3 \pi}{10}$ and whose boundary edge $e \subset \partial W$ has length $1$. We then metrize $W$ as an Euclidean polygonal complex. We call the resulting metric the \emph{flattened wheel metric} on $W$.
\end{definition}

\begin{remark}
A $5$-wheel with the flattened wheel metric is isometric to a regular Euclidean pentagon of side length $1$. We call such $5$-wheel a \emph{flattened $5$-wheel}. Under this isometry, the central vertex of the $5$-wheel is sent to the center of the pentagon.
\end{remark}

\begin{definition}
Let $X$ be a triangle-pentagon complex endowed with a metric $d$. We endow $X^{\star}$ with a metric $d^{\star}$ as follows:
\begin{itemize}

\item at any vertex $v \in X^{(0)}$, $d = d^{\star}$, i.e., for any interior vertex $v$ of $X$, the sum of the measures of the angles around it in $X$ equals the sum of the measures of the angles around it in $X^{\star}$;

\item on any $W_{5}$, let $d^{\star}$ be the flattened wheel metric; then $W_{5}$ becomes a flattened $5$-wheel;

\item let any triangle of $X^{\star}$ which is also a triangle of $X$, be equilateral and therefore let any of its angles be of measure $\cfrac{\pi}{3}$. 
\end{itemize}
\end{definition}

\begin{lemma}\label{3.1}
Let $X$ be a triangle-pentagon complex endowed with a CAT(0) metric $d$. Let $X^{\star}$ be endowed with the metric $d^{\star}$. Then $d^{\star}$ is a CAT(0) metric.
\end{lemma}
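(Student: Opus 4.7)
The plan is to apply the Bridson--Haefliger criterion (the theorem quoted in the preliminaries, together with the Cartan--Hadamard theorem): since $X^{\star}$ is a $2$-dimensional piecewise Euclidean complex with only two isometry classes of $2$-simplices, it suffices to verify (a) that $X^{\star}$ is simply connected and (b) that at every vertex $v$ of $X^{\star}$, every injective loop in the metric link $X^{\star}_{v}$ has length at least $2\pi$. Item (a) is immediate: $X^{\star}$ and $X$ share the same underlying topological space (subdivision of pentagons does not change homotopy type), and $X$ is simply connected because $(X,d)$ is CAT(0).

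For (b) I would split into cases according to the type of vertex. The easy case is a new vertex $c = c_{P} \in X_{p}$: its link in $X^{\star}$ is a cycle of length five, and each of its edges has length equal to the angle $2\pi/5$ at $c$ in the corresponding triangle of the flattened wheel; hence the total (and unique injective-loop) length is $5 \cdot \tfrac{2\pi}{5} = 2\pi$. The harder case is a vertex $v \in X^{(0)}$ already present in $X$. Here the goal is to show that $X^{\star}_{v}$ is isometric, as a metric graph, to $X_{v}$ (the original link in $X$), modulo interpolation of degree-two vertices. The link condition for $X^{\star}$ at $v$ then follows from the link condition for $X$ at $v$, which is available because $(X,d)$ is CAT(0).

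The verification that the two links are isometric is the technical core, and is where the specific choices made in the definition of $d^{\star}$ are used. Triangles of $X$ incident to $v$ are equilateral, so each contributes an arc of length $\pi/3$ to $X^{\star}_{v}$, exactly as in $X_{v}$. For each pentagon $P$ of $X$ incident at $v$, the subdivision inserts two triangles of $X^{\star}$ meeting at the edge $\{v,c_{P}\}$, each of which contributes an arc of length $3\pi/10$ at $v$; together they form a length-$\tfrac{3\pi}{5}$ subarc of $X^{\star}_{v}$ passing through the extra vertex $c_{P}$, of exactly the same total length as the single pentagon-arc of $X_{v}$ (a regular Euclidean pentagon has interior angle $\tfrac{3\pi}{5}$). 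Thus, after forgetting the degree-two vertices $c_{P}$, the metric graph $X^{\star}_{v}$ is isometric to $X_{v}$; since the set of embedded circles and their lengths are preserved by suppressing valence-two vertices, any injective loop in $X^{\star}_{v}$ has the same length as a corresponding injective loop in $X_{v}$, which is $\geq 2\pi$ by the CAT(0) hypothesis on $(X,d)$.

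The main obstacle is really a bookkeeping one: making the identification between $X^{\star}_{v}$ and $X_{v}$ precise at an old vertex $v$, in particular recognising that the defining clause ``at any vertex $v \in X^{(0)}$, $d = d^{\star}$'' forces the original metric on $X$ to have equilateral triangles of side $1$ and regular pentagons of side $1$ at each face around $v$, so that the per-face angle contributions in $X$ and in $X^{\star}$ match face by face. Once this identification is in hand, the two link-length conditions coincide and CAT(0) of $d^{\star}$ follows.
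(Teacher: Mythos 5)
Your proposal is correct and follows essentially the same route as the paper: verify the Bridson--Haefliger link condition (angle sum exactly $2\pi$ at pentagon centers, and matching face-by-face angle contributions $\pi/3$ and $2\cdot\tfrac{3\pi}{10}=\tfrac{3\pi}{5}$ at old vertices), then invoke simple connectivity of the subdivision and Cartan--Hadamard. Your treatment of the link condition at old vertices is in fact more careful than the paper's, which only speaks of the total angle sum around a vertex rather than reducing injective loops in $X^{\star}_{v}$ to injective loops in $X_{v}$ by suppressing the valence-two vertices $c_{P}$.
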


\begin{proof}
Let $v$ be the center of a $5$-wheel of $X$, i.e., $v$ is a vertex of $X^{\star}$.
Because $d^{\star}$ is the flattened wheel metric with which $W_{5}$ is endowed, the sum of the angles around $v$ is equal to $2 \pi$.  Due to the construction of $X^{\star}$, the metric $d^{\star}$ is well defined.

Because $d$ is a CAT(0) metric and $X$ has a finite number of isometry types of $2$-cells, the sum of the measures of the angles around each vertex of $X$ is at least $2 \pi$.

Note that $X^{\star}$ has a single isometry type of $2$-cells. Due to the definition of $d^{\star}$, the sum of the measures of the angles around each vertex of $X^{\star}$, is at least $2 \pi$. In conclusion, Theorem \ref{2.1} imlpies that $X^{\star}$ is locally a CAT(0) space. Because $X$ is a simply connected $2$-complex while $X^{\star}$ is a subdivision of $X$, $X^{\star}$ is also simply connected. Hence $X^{\star}$ is a CAT(0) space.
\end{proof}

\begin{lemma}\label{3.2}
Let $X$ be a CAT(0) triangle-pentagon complex.  Let $X^{\star}$ be endowed with the metric $d^{\star}$.  Then there are no $4$-large and no $3$-large vertices in $X^{\star}$. In particular, $X^{\star}$ is flag and locally $5$-large. Moreover, any $5$-large vertex of $X^{\star}$ is the center of a pentagon of $X$.
\end{lemma}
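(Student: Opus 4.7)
The plan is to deduce the combinatorial statements about links of $X^{\star}$ directly from the CAT(0) angle condition, using Lemma \ref{3.1} and Theorem \ref{2.1}. Lemma \ref{3.1} gives that $(X^{\star}, d^{\star})$ is CAT(0), and Theorem \ref{2.1} then forces every injective loop in each link $X^{\star}_{v}$ to have angular length at least $2\pi$.

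First I would record the three possible edge lengths in any link $X^{\star}_{v}$, each being the angle at $v$ in the corresponding $2$-cell of $X^{\star}$. Under $d^{\star}$ an equilateral triangle coming from $X$ contributes an edge of length $\pi/3$, and a subdivided-pentagon triangle contributes an edge of length $2\pi/5$ at the pentagon center and $3\pi/10$ at each of the two pentagon vertices it contains.

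Then I would split on whether $v$ is a pentagon center. If $v$ is the center of a pentagon $P$, the link $X^{\star}_{v}$ is exactly the boundary $5$-cycle of $P$, of total length $5 \cdot 2\pi/5 = 2\pi$; as a $5$-cycle it trivially contains no $3$- or $4$-cycle, and its only induced $5$-cycle is itself. If $v$ is not a pentagon center, every edge of $X^{\star}_{v}$ has length at most $\pi/3$ (since $\pi/3 > 3\pi/10$), so any injective $k$-cycle in $X^{\star}_{v}$ has length at most $k\pi/3$; for $k \in \{3,4,5\}$ this is bounded by $5\pi/3 < 2\pi$, and Theorem \ref{2.1} rules out such a cycle.

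Combining the two cases yields all three assertions simultaneously: no full $3$- or $4$-cycle occurs in any link, so there are no $4$-large or $3$-large vertices and $X^{\star}$ is locally $5$-large, and any full $5$-cycle in a link can only occur at a pentagon center. Flagness of $X^{\star}$ is then a brief combinatorial check, using that a $3$-cycle in the $1$-skeleton of $X^{\star}$ is either a $3$-cycle of $X$ (filled as a triangle $2$-cell by the convention for triangle-pentagon complexes) or a $3$-cycle involving a single pentagon center and two consecutive vertices of the corresponding pentagon, which is precisely one of the subdivided-pentagon triangles. The only point to watch is the angle arithmetic; the sole nontrivial inequality is $5\pi/3 < 2\pi$, and I do not expect any serious obstacle beyond this.
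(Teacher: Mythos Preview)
Your approach is correct and rests on the same idea as the paper's---the CAT(0) link condition of \S2.3---but your execution is tidier. The paper proceeds case by case: it separately computes the total angle at a putative $3$- or $4$-large vertex according to how many pentagons and equilateral triangles of $X$ meet there, and checks in each case that the sum falls below $2\pi$. You instead observe once that at any non-center vertex every link edge has angular length at most $\pi/3$ (since $3\pi/10 < \pi/3$), so any injective loop of combinatorial length $k \le 5$ has metric length at most $5\pi/3 < 2\pi$; this single inequality subsumes all of the paper's case distinctions and simultaneously handles the ``Moreover'' clause about $5$-large vertices, which the paper in fact defers to the proof of Theorem~\ref{3.3}. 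Your separate treatment of flagness is also more explicit than the paper's one-line inference from the absence of $3$-large vertices.
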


\begin{proof}
Due to Lemma \ref{3.1}, $X^{\star}$ endowed with the metric $d^{\star}$, is a CAT(0) space.

The proof is by contradiction.
Suppose there exists in $X^{\star}$ a $3$-large vertex $v$. There are two cases treated below.

 $\bullet$ Let $v$ be the vertex of a pentagon $\tau = \langle v = v_{1}, v_{2}, v_{3}, v_{4}, v_{5} \rangle$ of $X$. Let $w \in {X^{\star}}^{(0)}$ be the center of $\tau$. Because $v$ is $3$-large, $X_{v} = (v_{5}, v_{2}, w)$. Then the sum of the measures of the angles around $v$ is
 \begin{center}
$2 \cdot \cfrac{3 \pi}{10} + \cfrac{\pi}{3} = \cfrac{14 \pi}{15} < 2 \pi.$
 \end{center} This yields a contradiction with the fact that $X^{\star}$ is a CAT(0) space. In fact, because $\tau$ is a pentagon, we have $v_{2} \nsim v_{5}$. Therefore such case can in fact not occur.
 
 $\bullet$ Let $v$ be the common vertex of three equilateral triangles $\langle v,w_{i},w_{i+1} \rangle, 1 \leq i \leq 2$, $\langle v,w_{1},w_{3} \rangle$. Because $v$ is $3$-large, $X_{v} = (w_{1}, w_{2}, w_{3})$. Then the sum of the measures of the angles around $v$ is
 \begin{center}
$3 \cdot \cfrac{\pi}{3} = \pi < 2 \pi.$
 \end{center}This yields a contradiction with the fact that $X^{\star}$ is a CAT(0) space. 

 In conclusion $X^{\star}$ contains no $3$-large vertices. Therefore $X^{\star}$ is flag.

Suppose in $X^{\star}$ there exists a $4$-large vertex $v$. There are two cases treated below.

$\bullet$ Let $v$ be the vertex of a pentagon  $\tau = \langle v = v_{1}, v_{2}, v_{3}, v_{4}, v_{5} \rangle$ of $X$. Let $w_{2} \in {X^{\star}}^{(0)}$ be the center of $\tau$. Then $X_{v} = (w_{1}=v_{5}, w_{2}, w_{3}=v_{2}, w_{4})$. Besides $v$ is a common vertex of the equilateral triangle $\langle w_{1},v,w_{4} \rangle$ and $\langle w_{4},v,w_{3} \rangle$. Hence the sum of the measures of the angles around $v$ is
\begin{center}
$2 \cdot \cfrac{3 \pi}{10} + 2 \cdot \cfrac{\pi}{3} = \cfrac{19 \pi}{15} < 2 \pi.$
\end{center} This yields a contradiction with the fact that $X^{\star}$ is a CAT(0) space. 

$\bullet$ Let $v$ be the common vertex of the equilateral triangles $\langle v, w_{i}, w_{i+1} \rangle, 1 \leq i \leq 3$, $\langle v, w_{1}, w_{4} \rangle$. Then $X_{v} = (w_{1}, w_{2}, w_{3}, w_{4})$. Hence the sum of the measures of the angles around $v$ is
\begin{center}
$4 \cdot \cfrac{\pi}{3} < 2 \pi.$
\end{center} This yields a contradiction with the fact that $X^{\star}$ is a CAT(0) space. 

In conclusion, because $X^{\star}$ contains no $3$-large and no $4$-large vertices, it is locally $5$-large.

\end{proof}

\begin{theorem}\label{3.3}
Let $X$ be a CAT(0) triangle-pentagon complex.  Let $X^{\star}$ be endowed with the metric $d^{\star}$. Then $X^{\star}$ is $7$-located. 
\end{theorem}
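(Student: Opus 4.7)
My plan is to establish Theorem~\ref{3.3} vacuously by showing that no dwheel $W = W_1 \cup W_2$ in $X^\star$ satisfies the hypotheses of Definition~\ref{2.4} with $m = 7$. Since $X^\star$ is $2$-dimensional, the Remark rules out nonplanar dwheels. By Lemma~\ref{3.2}, $X^\star$ is flag and locally $5$-large, so every $k$-wheel has $k \geq 5$. Writing $W$ as a planar $(k,l)$-dwheel of boundary length $k + l - 4 \leq 7$ with $k, l \geq 5$, the only admissible pairs are $(k, l) \in \{(5, 5),\, (5, 6),\, (6, 5)\}$.

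The key auxiliary claim (essentially the last clause of Lemma~\ref{3.2}) is that the centre $u$ of any full $5$-wheel in $X^\star$ lies in $X_p$, and its rim is then the vertex set (inside $X^{(0)}$) of the corresponding pentagon of $X$. I would prove this directly: the rim of a full $5$-wheel $(u; u_1, \ldots, u_5)$ is a full $5$-cycle in $X^\star_u$, hence by Lemma~\ref{3.1} and Theorem~\ref{2.1} an injective loop of length at least $2\pi$ in the CAT(0) link of $u$. If $u$ were in $X^{(0)}$, each of the five angles at $u$ would be at most $\pi/3$ (either $\pi/3$ for an equilateral triangle of $X$ or $\frac{3\pi}{10}$ for a pentagon-triangle with $u$ on the boundary), summing to at most $5\pi/3 < 2\pi$, a contradiction. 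Hence $u \in X_p$, and the construction of $X^\star$ identifies the $5$-wheel with the subdivided pentagon at $u$, whose rim lies in $X^{(0)}$.

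Case $(5, 5)$: applying the claim to both wheels, $w_l, v_2 \in X_p$; but $v_2$ also lies on the rim of $W_1$, hence in $X^{(0)}$, contradicting $X_p \cap X^{(0)} = \emptyset$. Cases $(5, 6)$ and $(6, 5)$ are symmetric; letting $u$ denote the centre of the $6$-wheel, $u$ lies on the rim of the $5$-wheel (the two centres in a planar dwheel each sit on the other's rim), so the claim forces $u \in X^{(0)}$. By fullness, the rim of the $6$-wheel is a full $6$-cycle in $X^\star$, and hence a full (in particular injective) $6$-cycle in $X^\star_u$, whose length in the CAT(0) link must be at least $2\pi$. But two of the six triangles of the $6$-wheel are shared with the $5$-wheel: they lie in the flattened pentagon at the other centre and contribute the boundary angle $\frac{3\pi}{10}$ each at $u$, while each of the remaining four contributes at most $\frac{\pi}{3}$ (by the same dichotomy as above). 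Hence the length is at most $2 \cdot \frac{3\pi}{10} + 4 \cdot \frac{\pi}{3} = \frac{29\pi}{15} < 2\pi$, a contradiction.

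The main obstacle I anticipate is the bookkeeping: I must verify that the two shared triangles really are pentagon-triangles of the flattened pentagon at the $5$-wheel's centre (so their contribution at $u$ is the boundary angle $\frac{3\pi}{10}$, not the central angle $\frac{2\pi}{5}$), and that fullness of $W_2$ genuinely transfers to fullness of the rim $6$-cycle inside the metric link $X^\star_u$, so that the injective-loop inequality of Theorem~\ref{2.1} applies directly. Once these structural facts are in place, the three-case dichotomy plus the single angle estimate above complete the proof.
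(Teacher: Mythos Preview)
Your proposal is correct and follows essentially the same strategy as the paper: establish the result vacuously by showing that no planar $(k,l)$-dwheel with $k+l-4\le 7$ exists in $X^{\star}$, using the link/angle-sum condition from Theorem~2.1 together with Lemma~\ref{3.1} and Lemma~\ref{3.2}. Your organization is in fact a little cleaner than the paper's: where the paper verifies that a $5$-wheel centre must be a pentagon centre by checking two particular configurations, your uniform bound ``each angle at a vertex of $X^{(0)}$ is at most $\pi/3$'' handles all possible mixtures of equilateral and pentagon triangles at once; and your $(5,5)$ case is disposed of immediately via $X_p\cap X^{(0)}=\emptyset$, whereas the paper redoes an angle computation there. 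The $(5,6)$/$(6,5)$ estimate $2\cdot\frac{3\pi}{10}+4\cdot\frac{\pi}{3}=\frac{29\pi}{15}<2\pi$ is exactly the paper's, and your identification of the two shared triangles as pentagon-triangles contributing the boundary angle $\frac{3\pi}{10}$ at $u$ is correct.
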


\begin{proof}
Lemma \ref{3.1} implies that $d^{\star}$ is a CAT(0) metric.

We show that any loop $\gamma$ in $X^{\star}$ of length at most $7$ at the boundary of a dwheel, is in the link of a vertex, i.e., there is a vertex $u$ of $X^{\star}$ such that $\gamma \subset X^{\star}_{u}$. 
Due to the Lemma \ref{3.2}, in $X^{\star}$ there are no $3$-large and no $4$-large vertices.

We show that the only $5$-large vertices of $X^{\star}$ are the centers of the pentagons of $X$. Suppose in $X^{\star}$ there exists a $5$-large vertex $v$ which is not the center of a pentagon of $X$. Then there are two cases treated below.

$\bullet$ Suppose $v$ lies at the intersection of a pentagon of $X$ and three equilateral triangles of $X$ (and therefore also of $X^{\star}$). Hence in $X^{\star}$ the sum of the measures of the angles at $v$ is
\begin{center}
$2 \cdot \cfrac{3 \pi}{10} + 3 \cdot \cfrac{\pi}{3} < 2 \pi.$
\end{center} This yields a contradiction with $X^{\star}$ being a CAT(0) space.

$\bullet$ Suppose $v$ lies at the intersection of five equilateral triangles of $X$ (and therefore also of $X^{\star}$). Hence in $X^{\star}$ the sum of the measures of the angles at $v$ is
\begin{center}
$5 \cdot \cfrac{\pi}{3} < 2 \pi.$
\end{center} This yields a contradiction with $X^{\star}$ being a CAT(0) space. 

In conclusion the only $5$-large vertices $v$ of $X^{\star}$ are the centers of pentagons. Let $v$ be the center of the pentagon $\tau = \langle v_{1}, v_{2}, v_{3}, v_{4}, v_{5} \rangle$ of $X$. Then $X_{v} = (v_{1}, v_{2}, v_{3}, v_{4}, v_{5})$.
Let $w=v_{2}$ be a vertex of $X$ adjacent to $v$. Due to Lemma \ref{3.2}, $w$ is not $3$-large and it is not $4$-large.

$\bullet$ Suppose $w$ is $5$-large. Let $X_{w} = (w_{1}=v_{1}, w_{2}, w_{3}, w_{4}=v_{3}, w_{5} = v)$. Then $\gamma = (w_{1}=v_{1}, w_{2}, w_{3}, w_{4}=v_{3}, v_{4}, v_{5})$ is a loop in $X^{\star}$ at the boundary of a dwheel. Note that $|\gamma| = 6$. Note that the sum of the measures of the angles around $w$ is
\begin{center}
$2 \cdot \cfrac{3 \pi}{10} + 3 \cdot \cfrac{\pi}{3} = \cfrac{8 \pi}{5} < 2 \pi$. \end{center} This yields a contradiction with the fact that $X^{\star}$ is a CAT(0) space.

$\bullet$ Suppose $w$ is $6$-large. Let $X_{w} = (w_{1}=v_{1}, w_{2}, w_{3}, w_{4}, w_{5}=v_{3}, w_{6} = v)$. Then $\gamma =(w_{1}=v_{1}, w_{2}, w_{3}, w_{4}, w_{5} = v_{3}, v_{4}, v_{5})$ is a loop in $X^{\star}$ at the boundary of a dwheel. Note that $|\gamma| = 7$. Hence the sum of the measures of the angles around $w$ is
\begin{center}
$2 \cdot \cfrac{3 \pi}{10} + 4 \cdot \cfrac{\pi}{3} = \cfrac{29 \pi}{15} < 2 \pi$.\end{center} This yields a contradiction with the fact that $X^{\star}$ is a CAT(0) space.

In conclusion in $X^{\star}$ there are no loops $\gamma$ of length at most $7$ at the boundary of a dwheel. Any loop in $X^{\star}$ at the boundary of a dwheel and of length at most $7$, is therefore filled with a single vertex. Due to Lemma \ref{3.2}, $X^{\star}$ is flag. In conclusion $X^{\star}$ is $7$-located.


\end{proof}

The theorem shown above enables to give examples of $7$-located, locally $5$-large groups.

\begin{corollary}\label{3.4}
    Let $G$ be a group acting geometrically on the $1$-skeleton of a CAT(0) triangle-pentagon complex $X$. Then $G$ is $7$-located and locally $5$-large.
\end{corollary}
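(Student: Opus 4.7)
The plan is to reduce the corollary to Theorem \ref{3.3} and Lemma \ref{3.2} by lifting the geometric action of $G$ from the $1$-skeleton of $X$ to an action on the simplicial complex $X^{\star}$, so that $X^{\star}$ becomes a model space witnessing the $7$-located, locally $5$-large property of $G$.

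First, I would promote the action on the $1$-skeleton $G$ of $X$ to an action on $X$ itself. As noted in Subsection $2.6$, a group action on a graph induces an action on its clique complex and, more importantly for us, on the triangle-pentagon complex of that graph. In particular, the pentagons of $X$ are permuted by $G$, as are the triangles. Next, I would observe that the subdivision $X \mapsto X^{\star}$ is canonical: it adjoins exactly one new vertex (the center) per pentagon of $X$, together with the five edges and five triangles it determines. Consequently, every automorphism of $X$ lifts uniquely to an automorphism of $X^{\star}$, giving a homomorphism $G \to \mathrm{Aut}(X^{\star})$.

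The main verification — and the only step that requires care — is to show that this induced action on $X^{\star}$ is still geometric. For properness, note that the stabilizer in $X^{\star}$ of an original vertex of $X$ coincides with its stabilizer in $X$ (and hence is finite), while the stabilizer of a new vertex $w \in X_{p}$ is the stabilizer of the pentagon $\tau$ it centers, which is finite since $\tau$ has only finitely many automorphisms and the stabilizer of each of its vertices is finite. Stabilizers of edges and triangles of $X^{\star}$ are contained in vertex stabilizers, so they are finite too. For cocompactness, since $X/G$ has finitely many cells and $X^{\star}$ adds finitely many simplices per pentagon of $X$ in a $G$-equivariant way, the quotient $X^{\star}/G$ is again a compact finite $2$-complex.

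Finally, I would invoke the results of the paper: by Lemma \ref{3.2}, $X^{\star}$ endowed with $d^{\star}$ is flag and locally $5$-large, and by Theorem \ref{3.3} it is $7$-located. Since $G$ acts geometrically on such a simplicial complex, $G$ itself is by definition a $7$-located, locally $5$-large group, as announced. The potential obstacle — ensuring the geometric nature of the action survives the subdivision — is mild, because the subdivision is equivariant by construction and only introduces one new orbit type of vertex per $G$-orbit of pentagons.
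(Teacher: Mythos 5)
Your proposal is correct and follows essentially the same route as the paper: induce the action from the $1$-skeleton to $X$, then to the canonical subdivision $X^{\star}$, and apply Lemma \ref{3.2} and Theorem \ref{3.3}. You simply spell out the verification that the induced action on $X^{\star}$ remains proper and cocompact, which the paper asserts without detail.
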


\begin{proof}

The geometric action of $G$ on $X^{(1)}$ induces an action of $G$ on its clique complex $X$. Hence $G$ induces a geometric action on $X^{\star}$, the subdivision of $X$. Due to Theorem \ref{3.3}, $X^{\star}$ is $7$-located and locally $5$-large. Therefore, so is $G$.

\end{proof}

\begin{corollary}\label{3.5}
Let $X$ be a triangle-pentagon complex endowed with a CAT(0) metric. Then $X^{\star}$ is a $5/8$-complex (only the second condition of the definition is fulfilled).
\end{corollary}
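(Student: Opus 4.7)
The plan is to verify the two clauses of Definition~\ref{2.2} for $X^{\star}$ by reusing the angle-sum analysis from Lemma~\ref{3.1}, Lemma~\ref{3.2} and the proof of Theorem~\ref{3.3}. The parenthetical remark in the statement is a hint that clause $(5/8)$ will be satisfied only vacuously, so the real content of the corollary lies in clause $(6/7)$.

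To dispose of $(5/8)$, I would invoke Lemma~\ref{3.2}, which tells us that every vertex of $X^{\star}$ is at least $5$-large. Consequently there are no vertices in $X^{\star}$ that are $4$-large but not $5$-large, and the hypothesis of $(5/8)$ is never met; the clause holds vacuously. For clause $(6/7)$, Lemma~\ref{3.2} also pinpoints the candidate vertices: the $5$-large (but not $6$-large) vertices of $X^{\star}$ are exactly the centers of the pentagons of $X$. Let $v$ be such a center, coming from a pentagon $\tau = \langle v_{1}, v_{2}, v_{3}, v_{4}, v_{5} \rangle$ of $X$, and let $w$ be a vertex of $X^{\star}$ adjacent to $v$, so $w = v_{i}$ for some $i$. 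The task is to show that $w$ is $7$-large.

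I would argue by contradiction, imitating the computation carried out in the proof of Theorem~\ref{3.3}. If $w$ were $5$-large (respectively $6$-large), the link $X^{\star}_{w}$ would be a full $5$-cycle (respectively $6$-cycle), so $w$ would lie in exactly five (respectively six) $2$-simplices of $X^{\star}$. Two of these are the triangles of the flattened $5$-wheel at $\tau$ meeting $w$, each contributing an angle of $\frac{3\pi}{10}$ at $w$, and the remaining three (respectively four) are equilateral triangles of $X$ contributing $\frac{\pi}{3}$ each. The total angle around $w$ would then be $\frac{8\pi}{5}$ (respectively $\frac{29\pi}{15}$), both strictly less than $2\pi$. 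By Lemma~\ref{3.1} and Theorem~\ref{2.1}, this is impossible since $(X^{\star}, d^{\star})$ is CAT(0). Hence $w$ is at least $7$-large, which establishes clause $(6/7)$.

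I do not expect a real obstacle: the corollary is essentially a translation of the angle-sum computations in Theorem~\ref{3.3} into the language of Definition~\ref{2.2}. The only subtlety worth flagging is checking that $w$ meets $\tau$ in exactly two of its five pentagonal triangles, so that its angle contribution from the flattened wheel is always $\frac{3\pi}{5}$ and the rest of the angle sum comes from equilateral triangles of $X$; this is immediate because $\tau$ has no diagonals and $w$ is a single vertex of $\tau$.
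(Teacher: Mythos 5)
Your proposal is correct and follows essentially the same route as the paper: the paper's proof simply cites Theorem~\ref{3.3} (whose proof contains exactly the angle-sum computations you reproduce for the $(6/7)$ clause) and observes that the $(5/8)$ clause is vacuous since $X^{\star}$ has no $4$-large vertices. The only detail worth noting is that $w$ could a priori lie on further pentagons of $X$, but since $\frac{3\pi}{10} < \frac{\pi}{3}$ this only decreases the angle sum and strengthens the contradiction, so your argument (like the paper's) goes through.
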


\begin{proof}
The Corollary follows by Theorem \ref{3.3}. Because $X^{\star}$ has no $4$-large vertices, the first condition in the definition of $5/8$-complexes, is empty.
\end{proof}

Let $v_{1} \sim v_{2} \sim v_{3}$ be vertices of a complex $X$.
We denote by $m(\widehat{v_{1}v_{2}v_{3}})$ the measure of the angle $\widehat{v_{1}v_{2}v_{3}}$.

\begin{theorem}\label{3.6}
Let $X$ be a triangle-pentagon complex endowed with a CAT(0) metric. Let $X^{\star}$ be endowed with the metric $d^{\star}$. Let $v$ be a vertex of a pentagon of $X$. Then the combinatorial girth in $X^{\star}$ of $v$, is at least $7$.
\end{theorem}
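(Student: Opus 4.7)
The plan is to count the number of $2$-simplices of $X^{\star}$ incident to $v$, which equals the number of edges of the link $X^{\star}_{v}$ and hence the combinatorial girth of $v$, and then combine the CAT(0) angle-sum inequality with an integrality argument to force this count to be at least $7$.

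Let $k$ be the number of pentagons of $X$ containing $v$ and let $t$ be the number of triangles of $X$ containing $v$. Since $v$ is a vertex of a pentagon of $X$, $k \geq 1$. By construction of $X^{\star}$, each pentagon of $X$ at $v$ is subdivided into a $5$-wheel whose subdivision contains exactly two triangles having $v$ as a vertex (namely the two wheel triangles adjacent to the two pentagon edges at $v$), while each triangle of $X$ at $v$ remains a single $2$-simplex of $X^{\star}$ at $v$. Since the edges of the link $X^{\star}_{v}$ are in bijection with the $2$-simplices of $X^{\star}$ containing $v$, the combinatorial girth of $v$ is at least $2k + t$.

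By Lemma \ref{3.1}, $(X^{\star}, d^{\star})$ is CAT(0), so the sum of the angles at $v$ in the $2$-cells of $X^{\star}$ meeting $v$ is at least $2\pi$. Under $d^{\star}$, each wheel triangle has apex angle $\frac{3\pi}{10}$ at $v$ while each equilateral triangle has angle $\frac{\pi}{3}$ at $v$, so every pentagon of $X$ at $v$ contributes $2 \cdot \frac{3\pi}{10} = \frac{3\pi}{5}$ and every triangle of $X$ at $v$ contributes $\frac{\pi}{3}$. Hence
\[
k \cdot \frac{3\pi}{5} + t \cdot \frac{\pi}{3} \geq 2\pi, \qquad \text{i.e.,} \qquad 9k + 5t \geq 30.
\]

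To finish, observe that $5(2k + t) = 10k + 5t = k + (9k + 5t) \geq 1 + 30 = 31$, so $2k + t \geq 31/5 > 6$; since $2k + t$ is an integer, $2k + t \geq 7$. The delicate point is this final integrality step: the continuous relaxation \emph{minimize $2k + t$ subject to $9k + 5t \geq 30$ and $k \geq 1$} attains only $31/5 = 6.2$, so one genuinely needs both the integer constraint and the strict bound $k \geq 1$ (coming from $v$ being a pentagon vertex) to promote the bound from $6$ to $7$.
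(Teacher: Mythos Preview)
Your proof is correct and is genuinely more efficient than the paper's. The paper proceeds by an exhaustive case analysis on the number $k$ of pentagons of $X$ incident to $v$ (treating $k=4,3,2,1$ separately, with further sub-cases on the number of remaining triangles), each time computing the explicit angle sum and checking whether it reaches $2\pi$. Your argument packages all of this into the single linear inequality $9k+5t\geq 30$ together with $k\geq 1$, and then extracts $2k+t\geq 7$ via the identity $5(2k+t)=k+(9k+5t)$ and integrality. This not only unifies the four cases but also automatically covers configurations the paper's case split leaves implicit (e.g.\ $k\geq 5$, or two pentagons at $v$ that do not share an edge). What the paper's approach buys is explicit witnesses: for each $k$ it exhibits the minimal admissible configuration and verifies that its girth is exactly $7$ (or $8$ when $k=4$), so one sees that the bound is sharp.

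Two minor remarks. First, your ``at least $2k+t$'' is in fact an equality: every $2$-simplex of $X^{\star}$ containing $v$ is either a triangle of $X$ or one of the two wedge triangles of a subdivided pentagon at $v$, so the edges of $X^{\star}_v$ are counted exactly by $2k+t$. Second, the step ``angle sum at $v$ is at least $2\pi$'' uses that the link $X^{\star}_v$ is an injective loop (i.e.\ $v$ is an interior vertex); the paper makes this same assumption explicitly in each of its cases, so you are matching its hypotheses.
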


\begin{proof}
Because $X$ is a CAT(0) space, Lemma \ref{3.1} implies that $d^{\star}$ is a CAT(0) metric.

We distinguish four cases which are treated below.

$\bullet$ Let $v$ be an interior vertex of $X$ which is a common vertex of four pentagons $\tau_{i}, 1 \leq i \leq 4$ of $X$ pairwise intersecting along their common edges. Let $x_{i}$ be the center of the pentagon $\tau_{i}, 1 \leq i \leq 4$. Let $\tau_{1} = \langle v,v_{1}, v_{2}, v_{3}, v_{4} \rangle$, $\tau_{2} = \langle v, v_{4}, v_{5}, v_{6}, v_{7} \rangle$, $\tau_{3} = \langle v,v_{7}, v_{8}, v_{9}, v_{10} \rangle$, $\tau_{4} = \langle v,v_{10}, v_{11}, v_{12}, v_{1} \rangle$. Note that \begin{center}
$m(\widehat {v_{1} v x_{1}}) + m(\widehat {x_{1} v v_{4}}) + m(\widehat {v_{4} v x_{2}}) + m(\widehat {x_{2} v v_{7}}) +$ 
\end{center}
\begin{center}
$ + m(\widehat {v_{7} v x_{3}}) + m(\widehat {x_{3} v v_{10}}) + m(\widehat {v_{10} v x_{4}}) + m(\widehat {x_{4} v v_{1}}) = 8 \cdot \cfrac{3 \pi}{10} = \cfrac{12 \pi}{5} > 2 \pi$.\end{center} Hence such configuration does not spoil the CAT(0) metric, $X^{\star}$ is endowed with.
 Note that $|X^{\star}_{v}| = 8$. Then the combinatorial girth of $v$ in $X^{\star}$ is $8$.

\begin{figure}[h]
            \begin{center}
               \includegraphics[height=6cm]{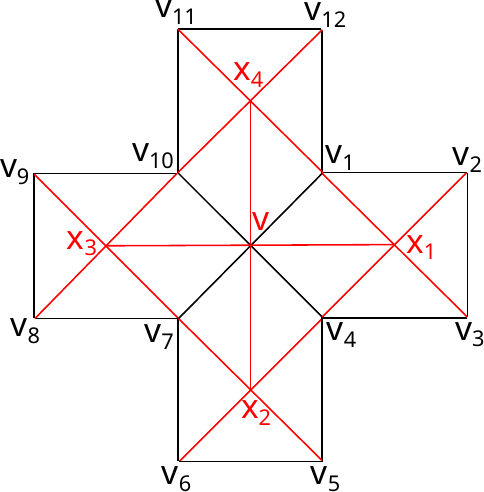}
              \caption{}
            \end{center}
        \end{figure}

$\bullet$ Let $v$ be an interior vertex of $X$ which is a common vertex of three pentagons $\tau_{i}, 1 \leq i \leq 3$ of $X$ such that $\tau_{1}$, $\tau_{2}$, and $\tau_{2}$, $\tau_{3}$, respectively, pairwise intersect along their common edges. Let $x_{i}$ be the center of the pentagon $\tau_{i}, 1 \leq i \leq 3$. Let $\tau_{1} = \langle v,v_{1}, v_{2}, v_{3}, v_{4} \rangle$, $\tau_{2} = \langle v, v_{4}, v_{5}, v_{6}, v_{7} \rangle$, $\tau_{3} = \langle v,v_{7}, v_{8}, v_{9}, v_{10} \rangle$. We discuss the cases $v_{1} = v_{10}$ and $v_{1} \sim v_{10}$.

\begin{enumerate}
\item If $v_{1} = v_{10}$, we have
\begin{center}
$m(\widehat {v_{1} v x_{1}}) + m(\widehat {x_{1} v v_{4}}) + m(\widehat {v_{4} v x_{2}}) + m(\widehat {x_{2} v v_{7}}) +$ 
\end{center}
\begin{center}
$ + m(\widehat {v_{7} v x_{3}}) + m(\widehat {x_{3} v v_{10}}) = 6 \cdot \cfrac{3 \pi}{10} = \cfrac{9 \pi}{5} < 2 \pi$
\end{center}  which yields a contradiction with $X^{\star}$ being a CAT(0) space. Therefore $v_{1} \neq v_{10}$.

\item If $v_{1} \sim v_{10}$, we have
\begin{center}
$m(\widehat {v_{1} v x_{1}}) + m(\widehat {x_{1} v v_{4}}) + m(\widehat {v_{4} v x_{2}}) + m(\widehat {x_{2} v v_{7}}) +$ 
\end{center}
\begin{center}
$ + m(\widehat {v_{7} v x_{3}}) + m(\widehat {x_{3} v v_{10}}) + m(\widehat {v_{10} v v_{1}}) = 6 \cdot \cfrac{3 \pi}{10} + \cfrac{\pi}{3} = \cfrac{32 \pi}{15} > 2 \pi$.
\end{center} Hence such configuration does not spoil the CAT(0) metric, $X^{\star}$ is endowed with. Note that $|X^{\star}_{v}| = 7$. Then the combinatorial girth of $v$ in $X^{\star}$ is $7$.

\begin{figure}[h]
            \begin{center}
               \includegraphics[height=5cm]{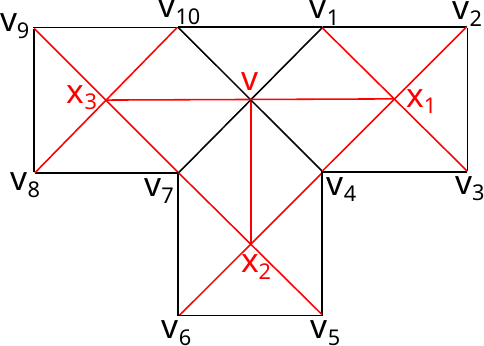}
              \caption{}
            \end{center}
        \end{figure}
\end{enumerate}

$\bullet$ Let $v$ be an interior vertex of $X$ which is the common vertex of two pentagons $\tau_{i}, 1 \leq i \leq 2$ of $X$ intersecting along their common edge.
Let $x_{i}$ be the center of the pentagon $\tau_{i}, 1 \leq i \leq 2$. Let $\tau_{1} = \langle v,v_{1}, v_{2}, v_{3}, v_{4} \rangle$, $\tau_{2} = \langle v, v_{4}, v_{5}, v_{6}, v_{7} \rangle$.  We discuss the cases $v_{1} = v_{7}$ and $1 \leq d(v_{1},v_{7}) \leq 3$.  

\begin{enumerate}
\item If $v_{1} = v_{7}$, we have
\begin{center}
$m(\widehat {v_{1} v x_{1}}) + m(\widehat {x_{1} v v_{4}}) + m(\widehat {v_{4} v x_{2}}) + m(\widehat {x_{2} v v_{7}}) =$
\end{center}
\begin{center}
$= 4 \cdot \cfrac{3 \pi}{10}  = \cfrac{6 \pi}{5} < 2 \pi$
\end{center}  which yields a contradiction because $X^{\star}$ is a CAT(0) space. Therefore $v_{1} \neq v_{7}$.

\item If $v_{1} \sim v_{7}$, we have
\begin{center}
$m(\widehat {v_{1} v x_{1}}) + m(\widehat {x_{1} v v_{4}}) + m(\widehat {v_{4} v x_{2}}) + m(\widehat {x_{2} v v_{7}}) + m(\widehat {v_{7} v v_{1}}) =$
\end{center}
\begin{center}
$= 4 \cdot \cfrac{3 \pi}{10} + \cfrac{\pi}{3} = \cfrac{23 \pi}{15} < 2 \pi$
\end{center}  which yields a contradiction because $X^{\star}$ is a CAT(0) space. Therefore $d(v_{1},v_{7}) > 1$.

\item If $d(v_{1},v_{7}) = 2$, i.e., there exists a vertex $v_{8}$ such that $v_{1} \sim v_{8} \sim v_{7}$. Then we have
\begin{center}
$m(\widehat {v_{1} v x_{1}}) + m(\widehat {x_{1} v v_{4}}) + m(\widehat {v_{4} v x_{2}}) + m(\widehat {x_{2} v v_{7}}) +$
\end{center}
\begin{center}
$+ m(\widehat {v_{7} v v_{8}})  + m(\widehat {v_{8} v v_{1}})= 4 \cdot \cfrac{3 \pi}{10} + 2 \cdot \cfrac{\pi}{3} = \cfrac{28 \pi}{15} < 2 \pi$
\end{center} which yields a contradiction because $X^{\star}$ is a CAT(0) space. Therefore $d(v_{1},v_{7}) > 2$.

\item If $d(v_{1},v_{7}) = 3$, i.e., there exist vertices $v_{8}, v_{9}$ such that $v_{1} \sim v_{9} \sim v_{8} \sim v_{7}$. Then we have
\begin{center}
$m(\widehat {v_{1} v x_{1}}) + m(\widehat {x_{1} v v_{4}}) + m(\widehat {v_{4} v x_{2}}) + m(\widehat {x_{2} v v_{7}}) +$
\end{center}
\begin{center}
$+ m(\widehat {v_{7} v v_{8}})  + m(\widehat {v_{8} v v_{9}}) + m(\widehat {v_{9} v v_{1}})= 4 \cdot \cfrac{3 \pi}{10} + 3 \cdot \cfrac{\pi}{3} = \cfrac{11 \pi}{5} > 2 \pi$.
\end{center} Hence such configuration does not spoil the CAT(0) metric, $X^{\star}$ is endowed with. Note that $|X^{\star}_{v}| = 7$. Then the combinatorial girth of $v$ in $X^{\star}$ is $7$.

\begin{figure}[h]
            \begin{center}
               \includegraphics[height=5cm]{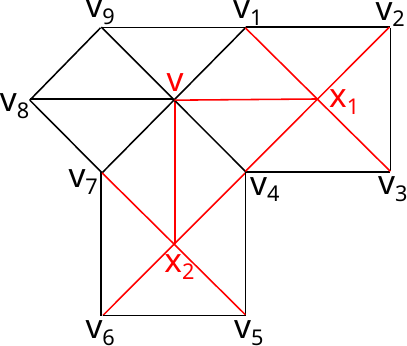}
              \caption{}
            \end{center}
        \end{figure}
\end{enumerate}

$\bullet$ Let $v$ be an interior vertex of $X$ which is a vertex of a pentagon  $\tau = \langle v,v_{1}, v_{2}, v_{3}, v_{4} \rangle$ of $X$.
Let $x_{1}$ be the center of the pentagon $\tau$.  We discuss the cases $1 \leq d(v_{1},v_{4}) \leq 5$. 

\begin{enumerate}
\item If $v_{1} \sim v_{4}$, we have
\begin{center}
$m(\widehat {v_{1} v x_{1}}) + m(\widehat {x_{1} v v_{4}}) + m(\widehat {v_{4} v v_{1}}) = 2 \cdot \cfrac{3 \pi}{10} + \cfrac{\pi}{3} = \cfrac{14 \pi}{15} < 2 \pi$
\end{center}  which yields a contradiction with $X^{\star}$ being a CAT(0) space. In fact, because $\tau_{1} = \langle v,v_{1}, v_{2}, v_{3}, v_{4} \rangle$ is a pentagon, we have $v_{1} \nsim v_{4}$. Hence such case can not even occur.

\item If $d(v_{1}, v_{4}) = 2$, i.e., there exists a vertex $v_{5}$ such that $v_{4} \sim v_{5} \sim v_{1}$. We have
\begin{center}
$m(\widehat {v_{1} v x_{1}}) + m(\widehat {x_{1} v v_{4}}) + m(\widehat {v_{4} v v_{5}}) + m(\widehat {v_{5} v v_{1}}) =$
\end{center}
\begin{center}
$= \cfrac{3 \pi}{5} + 2 \cdot \cfrac{\pi}{3} = \cfrac{19 \pi}{15} < 2 \pi$
\end{center}  which yields a contradiction with $X^{\star}$ being a CAT(0) space. Therefore $d(v_{1},v_{4}) > 2$.

\item If $d(v_{1}, v_{4}) = 3$, i.e., there exist vertices $v_{5}, v_{6}$ such that $v_{4} \sim v_{5} \sim v_{6} \sim v_{1}$. Then we have
\begin{center}
$m(\widehat {v_{1} v x_{1}}) + m(\widehat {x_{1} v v_{4}}) + m(\widehat {v_{4} v v_{5}}) + m(\widehat {v_{5} v v_{6}}) +$
\end{center}
\begin{center}
$+ m(\widehat {v_{6} v v_{1}}) = 2 \cdot \cfrac{3 \pi}{10} + 3 \cdot \cfrac{\pi}{3} = \cfrac{8 \pi}{5} < 2 \pi$
\end{center}  which yields a contradiction with $X^{\star}$ being a CAT(0) space. Therefore $d(v_{1},v_{4}) > 3$.

\item If $d(v_{1}, v_{4}) = 4$, i.e., there exist vertices $v_{5}, v_{6}, v_{7}$ such that $v_{4} \sim v_{5} \sim v_{6} \sim v_{7} \sim v_{1}$. Then we have
\begin{center}
$m(\widehat {v_{1} v x_{1}}) + m(\widehat {x_{1} v v_{4}}) + m(\widehat {v_{4} v v_{5}}) + m(\widehat {v_{5} v v_{6}}) +$
\end{center}
\begin{center}
$+ m(\widehat {v_{6} v v_{7}}) + m(\widehat {v_{7} v v_{1}}) = 2 \cdot \cfrac{3 \pi}{10} + 4 \cdot \cfrac{\pi}{3} = \cfrac{29 \pi}{15} < 2 \pi$
\end{center} which yields a contradiction with $X^{\star}$ being a CAT(0) space. Therefore $d(v_{1},v_{4}) > 4$.

\item If $d(v_{1}, v_{4}) = 5$, i.e., there exist vertices $v_{5}, v_{6}, v_{7}, v_{8}$ such that $v_{4} \sim v_{5} \sim v_{6} \sim v_{7} \sim v_{8} \sim v_{1}$. Then we have
\begin{center}
$m(\widehat {v_{1} v x_{1}}) + m(\widehat {x_{1} v v_{4}}) + m(\widehat {v_{4} v v_{5}}) + m(\widehat {v_{5} v v_{6}}) +$ 
\end{center}
\begin{center}
$+ m(\widehat {v_{6} v v_{7}}) + m(\widehat {v_{7} v v_{8}}) + m(\widehat {v_{8} v v_{1}}) =$
\end{center}
\begin{center}
$= 2 \cdot \cfrac{3 \pi}{10} + 5 \cdot \cfrac{\pi}{3} = \cfrac{34 \pi}{15} > 2 \pi$.
\end{center}  Hence such configuration does not spoil the CAT(0) metric, $X^{\star}$ is endowed with. Note that $|X^{\star}_{v}| = 7$. Then the combinatorial girth of $v$ in $X^{\star}$ is $7$.

\begin{figure}[h]
            \begin{center}
               \includegraphics[height=4cm]{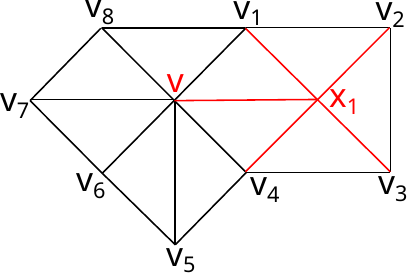}
              \caption{}
            \end{center}
        \end{figure}

\end{enumerate}

In conclusion the combinatorial girth in $X^{\star}$ of each vertex of a pentagon of $X$, is at least $7$. 


\end{proof}


\end{document}